\newcommand{\dd}{\mathrm{d}}
\newcommand{\E}{\mathbb{E}}
\newcommand{\1}{\textbf{1}}
\newcommand{\R}{\mathbb{R}}
\newcommand{\C}{\mathbb{C}} 
\newcommand{\e}{\varepsilon}
\newcommand{\p}[1]{\mathbb{P}\left( #1 \right)}
\newcommand{\scal}[2]{\left\langle #1, #2 \right\rangle}
\def\thm@space@setup{%
  \thm@preskip=12pt plus 0pt minus 0pt
  \thm@postskip=0pt plus 0pt minus 0pt
}
\xpatchcmd{\proof}{6\p@\@plus6\p@\relax}{\z@skip}{}{}
\newtheorem{theorem}{Theorem}
\newtheorem{lemma}[theorem]{Lemma}
\newtheorem{corollary}[theorem]{Corollary}
\theoremstyle{remark}
\newtheorem{remark}[theorem]{Remark}
\theoremstyle{definition}
\title{Complex Hanner's inequality for many functions}
\author{Jonathan Jenkins}
\address{(J.J.) Carnegie Mellon University; Pittsburgh, PA 15213, USA and Pennsylvania State University; State College, PA 16801, USA.}
\email{jtj5311@psu.edu}
\author{Tomasz Tkocz}
\address{(T.T) Carnegie Mellon University; Pittsburgh, PA 15213, USA.}
\email{ttkocz@andrew.cmu.edu}
\thanks{TT's research supported in part by NSF grant DMS-1955175.}
\date{\today}
\begin{document}

\begin{abstract} 
We establish Hanner's inequality for arbitrarily many functions in the setting where the Rademacher distribution is replaced with higher dimensional random vectors uniform on Euclidean spheres.
\end{abstract}

\maketitle

\bigskip

\begin{footnotesize}
\noindent {\em 2010 Mathematics Subject Classification.} Primary 26D15; Secondary 52A41.

\noindent {\em Key words. } $L_p$ spaces, Hanner's inequality, uniform convexity, uniform spherically symmetric random vectors.
\end{footnotesize}

\bigskip

\section{Introduction and the main result}

Classical Hanner's inequality from \cite{Ha} states that for \emph{two} functions $f_1, f_2$ in $L_p$, we have
\begin{equation}\label{eq:Hanner}
\|f_1 + f_2\|_p^p + \|f_1 - f_2\|_p^p \leq \big|\|f_1\|_p + \|f_2\|_p\big|^p + \big|\|f_1\|_p - \|f_2\|_p\big|^p,
\end{equation}
when $p \geq 2$ and the reverse inequality holds when $1 \leq p \leq 2$. Throughout this paper,  without loss of generality we shall assume that the $L_p$ space is of \emph{real} functions on $[0,1]$ equipped with Lebesgue measure, with the underlying norm $\|f\|_p = (\int_0^1 |f(t)|^p \dd t)^{1/p}$. This inequality was discovered in relation to uniform convexity introduced by Clarkson in \cite{C} and gave an optimal result in this direction for $L_p$ spaces. Hanner's inequality has been influential and its various generalisations and sharpenings have been extensively studied, see, e.g. \cite{BCL, CFIL, CFL, IM, Tru}. 

One enthralling question concerns extensions of Hanner's inequality to \emph{many} functions. As exemplified by Schechtman in \cite{Sch}, one possible version is intimately connected to $p$-concavity constants of Banach lattices generated by unconditional basic sequences in $L_p$ spaces. In elementary terms, his main result says that for every $p \geq 3$, integer $n \geq 1$  and functions $f_1, \dots, f_n$ in $L_p$, we have
\begin{equation}\label{eq:Hanner-many}
\E\left\|\sum_{k=1}^n \e_k f_k\right\|_p^p \leq \E\left|\sum_{k=1}^n \e_k \|f_k\|_p\right|^p,
\end{equation}
where $\e_1, \e_2, \dots$ are independent identically distributed (i.i.d.) Rademacher random variables; that is $\p{\e_k = \pm 1} = \frac{1}{2}$. It is natural to conjecture that this inequality continues to hold for all $p \geq 2$, whilst for all $1 \leq p \leq 2$ the reversal holds (note that for $p=2$, we have equality by the parallelogram identity). To the best of our knowledge, both cases remain open. Plainly, $n=2$ reduces to Hanner's inequality \eqref{eq:Hanner}. 

For \emph{complex} Banach spaces, whenever the complex structure plays an important role, the Rademacher two-point distribution is replaced with the Steinhaus distribution (the uniform distribution on the complex unit circle $\{z \in \C, \ |z| = 1\}$), see, e.g. \cite{BD, DGT, DS, Ver1, Ver2}. Motivated by this, the main goal of this note is to show a complex analogue of \eqref{eq:Hanner-many}, where the Rademacher distribution in \eqref{eq:Hanner-many} is replaced by the Steinhaus distribution. In fact, we obtain the following multidimensional result, which when specialised to $d=2$ gives the complex analogue. Here and throughout, $\scal{\cdot}{\cdot}$ denotes the standard inner product, $|\cdot|$ the standard Euclidean norm on $\R^d$ and $S^{d-1} = \{x \in \R^n, \ |x| = 1\}$ is its unit sphere. 

\begin{theorem}\label{thm:main}
Let $d \geq 2$ and let $\xi_1, \xi_2, \dots$ be i.i.d. random vectors uniform on the unit Euclidean sphere $S^{d-1}$ in $\R^d$. Let $p \geq 2$. For every $n \geq 1$ and functions $f_1, \dots, f_n$ in $L_p$, we have
\begin{equation}\label{eq:main}
\E\left\|\sum_{k=1}^n \xi_k f_k\right\|_p^p\leq \E\left|\sum_{k=1}^n \xi_k \|f_k\|_p\right|^p.
\end{equation}
Assuming that $d \geq 3$, the reverse inequality holds when $1 \leq p \leq 2$.
\end{theorem}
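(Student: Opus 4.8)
The plan is to remove both $L_p$ norms from \eqref{eq:main} by Fubini's theorem. Since the $\xi_k$ are i.i.d.\ and symmetric, $\Phi(x_1,\dots,x_n):=\E\big|\sum_{k}x_k\xi_k\big|^p$ depends only on $(|x_1|,\dots,|x_n|)$, so the function $\Psi(b):=\E\big|\sum_{k}b_k^{1/p}\xi_k\big|^p$ on $[0,\infty)^n$ satisfies $\Phi(x)=\Psi(|x_1|^p,\dots,|x_n|^p)$; note that $\Psi$ is positively homogeneous of degree $1$. Expanding the left side of \eqref{eq:main} pointwise in the integration variable and the right side directly, \eqref{eq:main} turns into
\[
\int_0^1\Psi\big(|f_1(t)|^p,\dots,|f_n(t)|^p\big)\,\dd t\ls\Psi\Big(\textstyle\int_0^1|f_1|^p\,\dd t,\dots,\int_0^1|f_n|^p\,\dd t\Big).
\]
As every nonnegative function in $L_1([0,1])$ is of the form $|f|^p$, requiring this for all $f_1,\dots,f_n\in L_p$ is precisely Jensen's inequality for $\Psi$ (testing against two-valued $f_k$ recovers ordinary concavity along every segment), so \eqref{eq:main} is equivalent to $\Psi$ being concave on $[0,\infty)^n$; for $1\ls p\ls 2$ the reversed inequality of the theorem is equivalent to $\Psi$ being convex. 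For orientation: if the uniform sphere vectors $\xi_k$ were replaced by standard Gaussian vectors in $\R^d$, then by $2$-stability $\Psi(b)$ would equal a constant times $\big(\sum_k b_k^{2/p}\big)^{p/2}$, concave exactly for $p\gr 2$ and convex exactly for $1\ls p\ls2$ — the theorem asserts that uniform sphere vectors behave the same way.

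To prove the concavity I would localise to two variables. Conditioning on $\xi_1,\dots,\xi_{n-1}$ and using rotational invariance of $\xi_n$, one gets
\[
\Psi(b)=\E\,\varphi_p\big(|S_{n-1}|,b_n\big),\qquad S_{n-1}:=\textstyle\sum_{k<n}b_k^{1/p}\xi_k\in\R^d,
\]
where $\varphi_p(r,s):=\E_U\big[(r^2+s^{2/p}+2rs^{1/p}U)^{p/2}\big]$ and $U:=\scal{\xi}{e_1}$ has density proportional to $(1-u^2)^{(d-3)/2}$ on $[-1,1]$ (the arcsine law for $d=2$, the uniform law for $d=3$). One checks that $\varphi_p\big(b_1^{1/p},b_2\big)=\Psi(b_1,b_2)$ in the case $n=2$. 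The heart of the matter, and the step I expect to be the main obstacle, is the \emph{two-variable inequality}: for $p\gr 2$ the map $(a,b)\mapsto\varphi_p(a^{1/p},b)$ is concave on $[0,\infty)^2$, and it is convex there when $1\ls p\ls 2$. This is a strengthening of Hanner's two-function inequality \eqref{eq:Hanner} (which is the degenerate case where $U$ is replaced by $\pm1$), and I would establish it by a direct calculus argument: insert the explicit density, write out the Hessian, and reduce its definiteness to a one-parameter family of scalar inequalities in $r/s^{1/p}$ and $u$. It is exactly the smoothing effect of $d\gr 2$ — the law of $U$ having a density rather than atoms — that one expects to push the valid range of exponents down to $p\gr 2$, as opposed to the $p\gr 3$ currently available for the Rademacher problem \eqref{eq:Hanner-many} of \cite{Sch}.

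Given the two-variable inequality, I would finish by induction on $n$. The obstacle to running it directly on the displayed formula for $\Psi$ is that $|S_{n-1}|$ is not concave in $(b_1,\dots,b_{n-1})$; one must therefore induct on a strengthened statement — for instance concavity (for $p\gr2$) in all variables of an \emph{anchored} quantity such as $(c,b_1,\dots,b_m)\mapsto\E\big|c^{1/p}e+\sum_{k=1}^{m}b_k^{1/p}\xi_k\big|^p$ for a fixed unit vector $e$, together with its monotonicity in the anchor $c$ — so that at each step the newly peeled variable can be absorbed into the anchor and the remaining average over one $\xi_k$ preserves concavity; the two-variable inequality is the essential new ingredient for carrying this induction through. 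Unwinding back through the Fubini and Jensen reductions then yields \eqref{eq:main}.

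For the reversed inequality when $1\ls p\ls2$ the same scheme applies verbatim with ``concave'' and ``convex'' interchanged. The hypothesis $d\gr3$ enters only in the two-variable calculus step: for $d\gr3$ the weight $(1-u^2)^{(d-3)/2}$ is bounded on $[-1,1]$, which the estimates use, whereas for $d=2$ it blows up at $\pm1$ — the arcsine law being, in this respect, too close to the two-point law — and one does not expect the reversed inequality to survive when $d=2$.
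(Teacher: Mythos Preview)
Your reduction of \eqref{eq:main} to concavity (for $p\geq 2$) or convexity (for $1\leq p\leq 2$) of $\Psi(b)=\E\big|\sum_k b_k^{1/p}\xi_k\big|^p$ is correct and is exactly the paper's first step. The gap is in how you then prove concavity: the proposed induction does not work as stated. By rotational invariance of the $\xi_k$, your ``anchored'' quantity $\E\big|c^{1/p}e+\sum_{k=1}^m b_k^{1/p}\xi_k\big|^p$ does not depend on the unit vector $e$, and averaging $e$ over the sphere shows it equals $\Psi_{m+1}(c,b_1,\dots,b_m)$. So the anchored inductive hypothesis with $m$ variables is literally the un-anchored statement for $m+1$ variables; the ``strengthening'' is vacuous and the induction is circular. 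If instead you try to peel off one $\xi_m$ and compose, you face the fact that for \emph{fixed} $\xi_m$ the map $(c,b_m)\mapsto |c^{1/p}e+b_m^{1/p}\xi_m|^p$ is not concave (e.g.\ $\xi_m=-e$ gives $|c^{1/p}-b_m^{1/p}|^p$, which is convex), so concave--monotone composition cannot be applied pointwise before averaging. Your two-variable inequality is true, but it is just the case $n=2$ of the target and does not by itself generate the general case.

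The paper avoids induction entirely. After the Jensen reduction, it uses \eqref{eq:embedding} to rewrite $\Psi$ in terms of the one-dimensional marginals $\theta_k=\scal{\xi_k}{e_1}$, computes the full Hessian of $\Psi$ in one shot, and obtains
\[
\sum_{k,l}a_ka_l\partial_{kl}\Psi(x)=-\tfrac{p-1}{p}\sum_{k<l}\Big(\tfrac{a_k}{x_k}-\tfrac{a_l}{x_l}\Big)^2(x_kx_l)^{1/p}\,E_{k,l},\qquad E_{k,l}=\E\Big[\big|\textstyle\sum_j x_j^{1/p}\theta_j\big|^{p-2}\theta_k\theta_l\Big].
\]
Thus concavity/convexity reduces to the \emph{sign} of each $E_{k,l}$, and this is settled by a general two-point lemma: for independent unconditional $X,Y$ and nondecreasing $h$, one has $\E[h(|X+Y|)\scal{X}{Y}]\geq 0$. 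The required monotonicity of $x\mapsto\E|x+\text{(rest)}|^{p-2}$ comes, for $d\geq 3$, from unimodality of the density \eqref{eq:density} (this is where $d\geq 3$ truly enters for $1<p<2$), and for $d=2$, $p\geq 2$, from a separate complex-analytic argument. Incidentally, your closing remark that ``one does not expect the reversed inequality to survive when $d=2$'' is the opposite of the paper's position: the authors conjecture it \emph{does} hold for $d=2$ and explain that the obstruction is only that the individual $E_{k,l}$ no longer all carry the right sign, not that the Hessian fails to be semidefinite.
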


When $1 \leq p \leq 2$, we conjecture that the reverse inequality continues to hold for $d=2$.

We present the proof of Theorem \ref{thm:main} in the next section, after which we conclude the paper with several remarks, most notably regarding connections to basic unconditional sequences in complex $L_p$ spaces.

%As explained later in the proof, the cases $d \geq 4$ formally follow from the case $d = 3$, whereas it seems intriguing that the case $d=2$ holds for a slightly different reason than the case $d \geq 3$. 

%\subsection*{Acknowledgements.} 

\section{Proofs}

\subsection{Overview}
We follow Schechtman's approach from \cite{Sch}, which can be summarised as the following steps: 1) Restating \eqref{eq:Hanner-many} equivalently as the concavity of a function on $\R_+^n$, 2) Analysis of the Hessian, 3) Point-wise bounds (resulting from the convexity of $|\cdot|^{p-2}$). Some additional ideas are needed to adapt these steps to multivariate distributions but it ought to be highlighted that it is mainly rotational symmetry and homogeneity that underpin and make the adaptation and reduction to one dimension possible. We detail these steps and additionally recall the crucial notions of unimodality and rotational invariance in the next $4$ subsections. The final argument for dimensions $d \geq 3$ differs significantly from dimension $d=2$: the former relying on the unimodality of marginal distributions which is absent when $d=2$, in which case we leverage a complex analytic point of view. This occupies the last subsection.

\subsection{Equivalent forms}

We begin by remarking that thanks to a standard argument relying on Jensen's inequality, Theorem \ref{thm:main} becomes equivalent to the concavity/convexity of a certain $1$-homogeneous function.

\begin{lemma}\label{lm:equiv}
Under the assumptions of Theorem \ref{thm:main}, inequality \eqref{eq:Hanner-many} holds for some $n \geq 1$ and every $f_1, \dots, f_n \in L_p$ if and only if the following function
\begin{equation}\label{eq:phi}
\phi_n(x_1, \dots, x_n) = \E\left|\sum_{k=1}^n x_k^{1/p}\xi_k\right|^p
\end{equation}
is concave on $\R_+^n$, and the reverse inequality holds if and only if this function is convex.
\end{lemma}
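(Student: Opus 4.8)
The plan is to rewrite both sides of \eqref{eq:main} as integrals over $[0,1]$ of the single function $\phi_n$ from \eqref{eq:phi} and then to recognise the resulting statement as precisely an instance of Jensen's inequality applied to $\phi_n$.

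For the right-hand side of \eqref{eq:main}, since $\|f_k\|_p = \big(\|f_k\|_p^p\big)^{1/p}$, the definition \eqref{eq:phi} gives immediately that $\E\big|\sum_{k=1}^n \xi_k\|f_k\|_p\big|^p = \phi_n\big(\|f_1\|_p^p,\dots,\|f_n\|_p^p\big)$. For the left-hand side, Tonelli's theorem (all integrands are nonnegative) lets me swap expectation and integral, so $\E\big\|\sum_k \xi_k f_k\big\|_p^p = \int_0^1 \E\big|\sum_k \xi_k f_k(t)\big|^p\,\dd t$. Now fix $t\in[0,1]$: because the $\xi_k$ are independent and each $\xi_k$ is symmetric (being uniform on $S^{d-1}$, so $\xi_k \overset{d}{=} -\xi_k$), the random vector $\big(\sgn(f_1(t))\,\xi_1,\dots,\sgn(f_n(t))\,\xi_n\big)$ has the same distribution as $(\xi_1,\dots,\xi_n)$; writing $f_k(t) = \sgn(f_k(t))\,|f_k(t)|$ therefore yields $\E\big|\sum_k \xi_k f_k(t)\big|^p = \E\big|\sum_k \xi_k |f_k(t)|\big|^p = \phi_n\big(|f_1(t)|^p,\dots,|f_n(t)|^p\big)$. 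Hence \eqref{eq:main} is exactly the assertion that $\int_0^1 \phi_n\big(|f_1(t)|^p,\dots,|f_n(t)|^p\big)\,\dd t \leq \phi_n\big(\int_0^1 |f_1(t)|^p\,\dd t,\dots,\int_0^1 |f_n(t)|^p\,\dd t\big)$ for all $f_1,\dots,f_n\in L_p$; equivalently, $\E[\phi_n(Y)] \leq \phi_n(\E Y)$ for every $\R_+^n$-valued random variable $Y = (|f_1|^p,\dots,|f_n|^p)$ on the probability space $[0,1]$ with Lebesgue measure.

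If $\phi_n$ is concave on $\R_+^n$, this last inequality is Jensen's inequality, and it is legitimate to invoke it since $\phi_n$ is finite — bounded by $\big(\sum_k x_k^{1/p}\big)^p$ — hence continuous on $\R_+^n$, while $\E|Y| \leq \sum_k \|f_k\|_p^p < \infty$; this proves \eqref{eq:main}. Conversely, assume \eqref{eq:main} holds for all $f_1,\dots,f_n\in L_p$. Fix $a,b\in\R_+^n$ and $\lambda\in[0,1]$ and take the two-valued test functions $f_k = a_k^{1/p}\1_{[0,\lambda]} + b_k^{1/p}\1_{(\lambda,1]}$. Then $\int_0^1 |f_k(t)|^p\,\dd t = \lambda a_k + (1-\lambda)b_k$ and $\int_0^1 \phi_n\big(|f_1(t)|^p,\dots,|f_n(t)|^p\big)\,\dd t = \lambda\,\phi_n(a) + (1-\lambda)\,\phi_n(b)$, so the rewritten \eqref{eq:main} becomes $\lambda\,\phi_n(a) + (1-\lambda)\,\phi_n(b) \leq \phi_n\big(\lambda a + (1-\lambda)b\big)$; as $a,b,\lambda$ are arbitrary, $\phi_n$ is concave on $\R_+^n$. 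The case $1 \leq p \leq 2$ (reversed inequality $\iff$ convexity) follows by reversing every inequality above.

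I do not expect a genuine obstacle: the whole content is the symmetrisation step that turns $\E|\sum_k \xi_k f_k(t)|^p$ into $\phi_n(|f_1(t)|^p,\dots,|f_n(t)|^p)$, together with the choice of two-valued test functions that makes Jensen's inequality tight. The only mild technicalities are the measurability and integrability bookkeeping required for Tonelli and Jensen, and the observation that finiteness of $\phi_n$ on $\R_+^n$ supplies the continuity up to the closed boundary that Jensen needs on this domain — alternatively one applies Jensen on the interior and passes to the limit.
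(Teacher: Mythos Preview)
Your proof is correct and follows the same route as the paper's: rewrite both sides of \eqref{eq:main} via $\phi_n$, invoke Jensen for the concave$\Rightarrow$inequality direction, and plug in two-valued test functions for the converse. If anything you are more careful than the paper, which glosses over the symmetrisation step $\E\big|\sum_k \xi_k f_k(t)\big|^p = \E\big|\sum_k \xi_k |f_k(t)|\big|^p$ and the integrability bookkeeping for Jensen.
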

\begin{proof}
    We prove only the concave case since the convex case uses the same logic. Note that inequality \eqref{eq:Hanner-many} holds if and only if
    \[
        \int_0^1 \phi_n(|f_1|^p,\dots,|f_n|^p) \: dt \leq\phi_n(\lVert f_1 \rVert_p^p, \dots, \lVert f_n \rVert_p^p) 
    \]
    for all $f_1, \dots, f_n \in L_p$. This comes from merely translating inequality \eqref{eq:Hanner-many} into our new notation using $\phi_n$. Suppose this inequality holds for all $f_1, \dots, f_n$. Then for each $x,y \in (0,+\infty)$, if we define $f_i = x_i^{1/p}$ with probability $\lambda$ and $f_i = y_i^{1/p}$ with probability $1-\lambda$, then the inequality says exactly that
    \[
        \lambda \phi_n(x) + (1-\lambda) \phi_n(y) \leq \phi (\lambda x + (1-\lambda) y)
    \]
    Thus the inequality implies that $\phi_n$ is concave. To see the reverse direction, assume that $\phi_n$ is concave. Then our specified inequality follows exactly from Jensen's inequality. This concludes the proof of the lemma.
\end{proof}
Note that the expectation $\E\left|\sum_{k=1}^n a_k\xi_k\right|^p$ exists for all $n \geq 1$ and $a_1, \dots, a_n \in \R$ as long as $p > -(d-1)$. Moreover, as a result of rotational invariance, for every $p > -1$, $n \geq 1$ and $a_1, \dots, a_n \in \R$, we have
\begin{equation}\label{eq:prop}
\E\left|\sum_{k=1}^n a_k\xi_k\right|^p = \beta_{p,d}\E\left|\sum_{k=1}^n a_k\scal{\xi_k}{e_1}\right|^p
\end{equation}
with the constant
\[
\beta_{p,d} = \frac{1}{\E|\scal{\xi_1}{e_1}|^p} = \frac{\sqrt{\pi}\Gamma\left(\frac{d+p}{2}\right)}{\Gamma\left(\frac{d}{2}\right)\Gamma\left(\frac{p+1}{2}\right)}.
\]
This can perhaps be traced back to work of K\"onig and Kwapie\'n (see Lemma 8 in \cite{KK}) and follows from the identity
\begin{equation}\label{eq:embedding}
|x|^p = \beta_{p,d}\E|\scal{x}{\eta}|^p,
\end{equation}
which holds for every fixed vector $x$ in $\R^d$ and the expectation is taken with respect to $\eta$, a random vector uniform on the unit sphere.
We shall denote 
\[
\theta_k = \scal{\xi_k}{e_1}
\]
which are i.i.d. \emph{symmetric} random variables with density 
\begin{equation}\label{eq:density}
\frac{\Gamma\left(\frac{d}{2}\right)}{\sqrt{\pi}\Gamma\left(\frac{d-1}{2}\right)}(1-x^2)^{\frac{d-3}{2}}\1_{|x|\leq 1}.
\end{equation}
We recall that a random variable $X$ is called symmetric if $-X$ has the same distribution as $X$ (equivalently, $X$ is a mixture of scaled Rademacher distributions). In view of \eqref{eq:phi}, identity \eqref{eq:prop} leads to
\begin{equation}\label{eq:phi-proj}
\phi_n(x_1, \dots, x_n) = \beta_{p,d}\E\left|\sum_{k=1}^n x_k^{1/p}\theta_k\right|^p.
\end{equation}
Thus as long as $d \geq 2$ and $p > 1$, functions $\phi_n$ are $C^2$ on $(0,+\infty)^n$ and the convexity/concavity of $\phi_n$ is equivalent to its Hessian matrix being positive/negative semi-definite.

\subsection{Hessian}
The following lemma provides a handy expression for quadratic forms given by Hessian matrices of functions of the form \eqref{eq:phi-proj}.

\begin{lemma}\label{lm:Hess}
Let $Y_1, \dots, Y_n$ be i.i.d. symmetric random variables such that $\E|Y_1|^{-1+\delta}$ is finite for some $\delta > 0$. Let $p \geq 1+\delta$. Define $f\colon (0,+\infty)^n \to (0,+\infty)$,
\[
f(x_1, \dots, x_n) = \E\left|\sum_{k=1}^n x_k^{1/p}Y_k\right|^p.
\]
Then $f$ is $C^2$ and for every vectors $a \in \R^n$ and $x \in (0,+\infty)^n$, we have
\[
\sum_{k,l=1}^n a_ka_l\frac{\partial^2 f}{\partial x_k\partial x_l}(x) = -\frac{p-1}{p}\sum_{1 \leq k < l \leq n} \left(\frac{a_k}{x_k}-\frac{a_l}{x_l}\right)^2(x_kx_l)^{1/p}\E\left[\left|\sum_{j=1}^nx_j^{1/p}Y_j\right|^{p-2}Y_kY_l\right].
\]
\end{lemma}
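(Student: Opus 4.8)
The plan is to compute the Hessian of $f$ directly by differentiating under the expectation twice, and then to reorganize the resulting sum of pure second derivatives and mixed second derivatives into the stated ``sum over pairs'' form. First I would justify that $f$ is $C^2$ on $(0,+\infty)^n$ and that we may differentiate under the expectation: the map $t \mapsto |t|^p$ is $C^1$ with derivative $p\,\sgn(t)|t|^{p-1}$, and $C^2$ away from $0$ with second derivative $p(p-1)|t|^{p-2}$; the only concern is integrability near the event $\sum_j x_j^{1/p} Y_j = 0$, which is handled by the hypothesis $\E|Y_1|^{-1+\delta} < \infty$ together with $p \geq 1+\delta$ and the symmetry/independence of the $Y_k$ (conditioning on all but one variable, the conditional density of $\sum_j x_j^{1/p}Y_j$ is controlled, so $\E\big|\sum_j x_j^{1/p}Y_j\big|^{p-2}$ is finite). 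I would write $S = \sum_{j=1}^n x_j^{1/p} Y_j$ for brevity.

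Next I would carry out the differentiation. Writing $\partial_k S = \tfrac1p x_k^{1/p-1} Y_k$, the first derivative is
\[
\frac{\partial f}{\partial x_k}(x) = \E\Big[ p\,|S|^{p-1}\sgn(S)\cdot \tfrac1p x_k^{1/p-1} Y_k \Big] = x_k^{1/p-1}\,\E\big[|S|^{p-2} S\, Y_k\big],
\]
using $|S|^{p-1}\sgn(S) = |S|^{p-2}S$. Differentiating once more in $x_l$ produces two terms: one from differentiating the explicit factor $x_k^{1/p-1}$ (nonzero only when $l=k$), giving $(\tfrac1p-1)x_k^{1/p-2}\E[|S|^{p-2}S\,Y_k]$ when $l=k$; and one from differentiating $|S|^{p-2}S$ inside the expectation, which contributes $(p-1)\E\big[|S|^{p-2}\,\partial_l S\cdot Y_k\big] = \tfrac{p-1}{p} x_k^{1/p-1}x_l^{1/p-1}\E[|S|^{p-2}Y_kY_l]$. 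Contracting with $a_k a_l$ and summing, the diagonal ``explicit'' term becomes $(\tfrac1p-1)\sum_k a_k^2 x_k^{1/p-2}\E[|S|^{p-2}SY_k]$, and the other term becomes $\tfrac{p-1}{p}\sum_{k,l} a_ka_l x_k^{1/p-1}x_l^{1/p-1}\E[|S|^{p-2}Y_kY_l]$.

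The remaining task is an algebraic identity: rewriting this expression as the claimed sum over pairs $1\le k<l\le n$. The key observation is that $S = \sum_j x_j^{1/p}Y_j$, so $\E[|S|^{p-2}SY_k] = \sum_j x_j^{1/p}\E[|S|^{p-2}Y_jY_k]$, which lets me absorb the diagonal term into the same family of ``mixed moments'' $m_{kl} := x_k^{1/p}x_l^{1/p}\E[|S|^{p-2}Y_kY_l]$ (note $m_{kl}$ is symmetric in $k,l$). After dividing and multiplying by appropriate powers of $x_k,x_l$, the full quadratic form should collapse to $-\tfrac{p-1}{p}\sum_{k<l}\big(\tfrac{a_k}{x_k}-\tfrac{a_l}{x_l}\big)^2 m_{kl}$; I would verify this by expanding the square $\big(\tfrac{a_k}{x_k}-\tfrac{a_l}{x_l}\big)^2 = \tfrac{a_k^2}{x_k^2} - 2\tfrac{a_ka_l}{x_kx_l} + \tfrac{a_l^2}{x_l^2}$ and checking that the cross terms reproduce the $k\ne l$ part of $\tfrac{p-1}{p}\sum_{k,l}\cdots$ while the square terms $\tfrac{a_k^2}{x_k^2}$, summed over $l\ne k$ with weight $m_{kl}$, together with the leftover diagonal $m_{kk}$ contribution, reproduce exactly the explicit diagonal term via the identity $\sum_j x_j^{1/p}\E[|S|^{p-2}Y_jY_k] = \E[|S|^{p-2}SY_k]$. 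The main obstacle is really just bookkeeping in this last algebraic step — keeping careful track of which power of $x_k$ sits where and confirming the diagonal terms cancel against the expansion of the squares; the analytic differentiation-under-the-integral justification is routine given the moment hypothesis.
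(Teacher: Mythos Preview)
Your proposal is correct and follows essentially the same route as the paper: compute the first and second partial derivatives of $f$ (yielding the mixed moments $\gamma_{k,l}=\E[|S|^{p-2}Y_kY_l]$), then use the identity $\E[|S|^{p-2}SY_k]=\sum_j x_j^{1/p}\gamma_{k,j}$ to absorb the diagonal term and symmetrize the resulting double sum into $-\tfrac{p-1}{p}\sum_{k<l}(\tfrac{a_k}{x_k}-\tfrac{a_l}{x_l})^2(x_kx_l)^{1/p}\gamma_{k,l}$. The paper carries out the same computation with the same bookkeeping, only skipping the analytic justification you sketch for differentiation under the expectation.
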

\begin{proof}
    This lemma is implicit in Schechtman's work \cite{Sch}, in that it is proved for Rademacher random variables. For the sake of clarity we reprove it here. The main idea is that if we define $\gamma_{i,j} = \E \left| \sum_{k=1}^n x_k^{1/p} Y_k\right|^{p-2} Y_i Y_j $, then we can compute that for $i\neq j$, we have that
    \[
    \frac{\partial^2f}{\partial x_i \partial x_j}(x_1,\dots,x_n) =\frac{p-1}{p} \gamma_{i,j} x_i^{\frac{1-p}{p}} x_j^{\frac{1-p}{p}} 
    \]
    \[
    \frac{\partial^2f}{\partial x_i^2}(x_1,\dots,x_n) =\frac{p-1}{p} \gamma_{i,i} \left( x_i^{\frac{1-p}{p}} \right)^2- \frac{p-1}{p}\sum_{k=1}^n x_i^{\frac{1-2p}{p}} x_k^{1/p}\gamma_{i,k}
    \]
    Accordingly for any vector $a \in \R^n$ and $x \in (0,+\infty)^n$, we have that
    \begin{align*}
        \sum_{k,l=1}^n a_k a_l \frac{\partial^2 f}{\partial x_k \partial x_l} &= -\frac{p-1}{p} \left(\sum_{k, l = 1}^n \gamma_{k,l} x_k^{1/p}x_l^{1/p} (a_k^2 x_k^{-2} -a_k a_l x_k^{-1} x_l^{-1} ) \right)\\
        &= -\frac{p-1}{p} \sum_{1\leq k < l \leq n} \gamma_{k,l} (x_k x_l)^{1/p} (a_k^2 x_k^{-2} -2 a_k a_l x_k^{-1} x_l^{-1} + a_l^2 x_l^{-2}) \\
        &= -\frac{p-1}{p}\sum_{1 \leq k < l \leq n} \left(\frac{a_k}{x_k}-\frac{a_l}{x_l}\right)^2(x_kx_l)^{1/p}\E\left[\left|\sum_{j=1}^nx_j^{1/p}Y_j\right|^{p-2}Y_kY_l\right]
    \end{align*}
    This concludes the proof of the lemma.
\end{proof}

\subsection{Two-point inequalities}
In view of the expression from Lemma \ref{lm:Hess}, it is natural to investigate the sign of the expectations $\E\left[\left|\sum x_j^{1/p}Y_j\right|^{p-2}Y_kY_l\right]$. The final ingredients of the whole argument are some rather general two-point inequalities, resulting from the monotonicity of certain functions. We say that a random vector $X = (X_1, \dots, X_d)$ is unconditional if $(\e_1|X_1|, \dots, \e_d|X_d|)$ has the same distribution as $X$, where $\e_1, \dots, \e_d$ are i.i.d. Rademacher random variables, independent of $X$.

\begin{lemma}\label{lm:2pt-gen}
Let $X$ and $Y$ be independent unconditional random vectors in $\R^d$ and let $h\colon \R_+ \to \R_+$ be a nondecreasing function. Then (provided the expectation exists)
\[
\E\left[h(|X+Y|)\scal{X}{Y}\right] \geq 0.
\]
\end{lemma}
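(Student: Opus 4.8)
The plan is to exploit the unconditionality of $X$ and $Y$ to pair up sign patterns and reduce the inequality to a coordinate-wise comparison. First I would condition on the magnitudes $|X| = (|X_1|,\dots,|X_d|)$ and $|Y| = (|Y_1|,\dots,|Y_d|)$, so that it suffices to prove the inequality when $X = (\e_1 a_1, \dots, \e_d a_d)$ and $Y = (\delta_1 b_1, \dots, \delta_d b_d)$ with fixed nonnegative reals $a_i, b_i$ and independent Rademacher vectors $(\e_i)$, $(\delta_i)$. Then $\scal{X}{Y} = \sum_{i=1}^d \e_i\delta_i a_i b_i$, and since each product $\e_i\delta_i$ is itself a Rademacher variable, after relabelling it is enough to show
\[
\E\left[h\left(\Big|\sum_{i=1}^d \sigma_i c_i e_i\Big|\right) \sum_{i=1}^d \sigma_i c_i\right] \geq 0
\]
for a single Rademacher vector $\sigma = (\sigma_1,\dots,\sigma_d)$ and nonnegative $c_i$ (here $c_i$ plays the role of $a_i b_i$, and one also has to track the remaining independent signs, but these only enter through $|X+Y|$ via the $c_i$'s — this bookkeeping is the one genuinely fiddly point and I'd want to do it carefully).

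The core step is a pairing argument on the $2^d$ sign patterns. Group the terms by flipping a single coordinate: for each sign pattern $\sigma$ and each index $i$, compare $\sigma$ with the pattern $\sigma^{(i)}$ obtained by flipping $\sigma_i$. More cleanly, I would argue by a telescoping/monotonicity identity: write $g(t) = \int_0^t h$ or rather observe that for vectors $u, v$ one has $|u+v| \geq |u-v|$ precisely when $\scal{u}{v} \geq 0$. Decompose the sum over sign patterns so that patterns are matched in pairs $(\sigma, -\sigma)$ or, better, isolate one coordinate: conditioning on $(\sigma_2,\dots,\sigma_d)$ and writing $w = \sum_{i\geq 2}\sigma_i c_i e_i$, the contribution of the $\sigma_1 = \pm1$ terms is
\[
\tfrac12\Big(h(|w + c_1 e_1|) - h(|w - c_1 e_1|)\Big)\,c_1 + \big(\text{terms from }i\geq 2, \text{averaged over }\sigma_1\big).
\]
Since $h$ is nondecreasing and $|w + c_1 e_1| \geq |w - c_1 e_1| \iff \scal{w}{c_1 e_1} \geq 0 \iff \sum_{i\geq2}\sigma_i c_i\scal{e_i}{e_1} \geq 0$, the sign of the first bracket matches the sign of $\scal{w}{c_1e_1}$, which is exactly the sign of the $i=1$ term in $\scal{X}{Y}$ after reconditioning. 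Iterating this coordinate by coordinate, every cross term is shown to contribute nonnegatively, and the diagonal-type terms vanish by symmetry.

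The cleanest route, which I would actually write up, avoids the messy induction: keep $X, Y$ independent and unconditional, and introduce an auxiliary independent Rademacher $\e$; then $(X, \e Y)$ has the same joint distribution as $(X,Y)$ by unconditionality of $Y$ (applied coordinatewise with a global sign absorbed), so
\[
2\,\E\left[h(|X+Y|)\scal{X}{Y}\right] = \E\Big[\big(h(|X+Y|) - h(|X-Y|)\big)\scal{X}{Y}\Big],
\]
and now pointwise $h(|X+Y|) - h(|X-Y|)$ has the same sign as $|X+Y| - |X-Y|$ (as $h$ is nondecreasing on $\R_+$), which in turn has the same sign as $|X+Y|^2 - |X-Y|^2 = 4\scal{X}{Y}$; hence the integrand is a nonnegative quantity times $\scal{X}{Y}^2/|\scal{X}{Y}|$ — i.e.\ it equals $|h(|X+Y|) - h(|X-Y|)|\cdot|\scal{X}{Y}| \geq 0$ — and the expectation is nonnegative. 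The main obstacle is purely the justification that $(X,-Y) \overset{d}{=} (X,Y)$: this requires unconditionality of $Y$ together with independence of $X$ and $Y$ (flip every coordinate of $Y$), and one must make sure the expectation is finite so that the symmetrization identity is legitimate — both are handed to us by hypothesis.
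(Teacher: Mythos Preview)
Your final ``cleanest route'' is correct and is actually slicker than the paper's argument. The paper proceeds coordinate by coordinate: it expands $\scal{X}{Y} = \sum_k X_kY_k$ and, for each fixed $k$, averages over the two signs of $X_k$ alone to obtain
\[
\E[h(|X+Y|)X_kY_k] = \tfrac12\,\E\Big[\big(h(\sqrt{(|X_k|+|Y_k|)^2+R}) - h(\sqrt{(|X_k|-|Y_k|)^2+R})\big)\,|X_k||Y_k|\Big] \geq 0,
\]
with $R = \sum_{j\neq k}(X_j+Y_j)^2$. You instead flip the sign of the whole vector $Y$ at once and use $|X+Y|^2 - |X-Y|^2 = 4\scal{X}{Y}$ to see directly that $(h(|X+Y|)-h(|X-Y|))\scal{X}{Y} \geq 0$ pointwise. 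Your argument only needs $-Y \overset{d}{=} Y$ (a consequence of unconditionality, but strictly weaker) together with independence; the paper's route genuinely uses coordinatewise unconditionality and in return yields the finer information that each term $\E[h(|X+Y|)X_kY_k]$ is individually nonnegative, though that extra strength is never used downstream. A small correction to your earlier Rademacher sketch: the displayed reduction is garbled, since $|X+Y|$ is not $\big|\sum_i\sigma_i c_i e_i\big|$ but rather $\sqrt{|X|^2+|Y|^2 + 2\sum_i\sigma_i a_ib_i}$ --- so $h(|X+Y|)$ is in fact a nondecreasing function of $\scal{X}{Y}$ itself, and the same symmetrization then finishes immediately. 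Since you abandoned that route anyway, this is moot.
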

\begin{proof}
Since $\scal{X}{Y} = \sum_{k=1}^d X_kY_k$, it suffices to prove that for each $k$,
\[
\E\left[h(|X+Y|)X_kY_k\right] \geq 0.
\]
By independence and unconditionality, we can write, 
\[
\E [h(|X+Y|)X_kY_k] = \E\left[\frac{h\left(\sqrt{(|X_k|+|Y_k|)^2+R}\right)-h\left(\sqrt{(|X_k|-|Y_k|)^2+R}\right)}{2}|X_k||Y_k|\right]
\]
with $R = \sum_{j\neq k} (X_j+Y_j)^2$.
The lemma now follows since $|u|+|v| \geq \big||u|-|v|\big|$ for all real numbers $u, v$.
\end{proof}

To use this lemma, we need two elementary facts about the monotonicity of relevant functions arising from the uniform distribution on Euclidean spheres. In dimension 2, we use a complex analytic argument.

\begin{lemma}\label{lm:monot-steinhaus}
Let $q > 0$. Let $\xi$ be a $\C$-valued random variable uniform on the unit circle $\{z \in \C, \ |z| = 1\}$. The function
\[
g_q(z) = \E|z+\xi|^q, \qquad z \in \C,
\]
is radial and increasing on $[0, +\infty)$.
\end{lemma}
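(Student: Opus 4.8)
The plan is to exploit the rotational invariance identity \eqref{eq:embedding} to reduce the claim about the Steinhaus average $g_q(z)=\E|z+\xi|^q$ to a one-dimensional statement. First I would note that $g_q$ is radial: this is immediate since for any $\lambda\in\C$ with $|\lambda|=1$ we have $g_q(\lambda z)=\E|\lambda z+\xi|^q=\E|z+\bar\lambda\xi|^q=g_q(z)$ by the rotational invariance of the Steinhaus distribution. So it suffices to show that $r\mapsto g_q(r):=\E|r+\xi|^q$ is increasing on $[0,+\infty)$, where we now identify the real number $r$ with the point $(r,0)\in\R^2$.

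The key step is to obtain a tractable formula for $g_q(r)$. Writing $\xi=e^{i\vartheta}$ with $\vartheta$ uniform on $[0,2\pi)$, we have $|r+\xi|^2=r^2+2r\cos\vartheta+1$, so
\[
g_q(r)=\frac{1}{2\pi}\int_0^{2\pi}\bigl(r^2+2r\cos\vartheta+1\bigr)^{q/2}\,\dd\vartheta .
\]
One approach is to differentiate under the integral sign: $\frac{\dd}{\dd r}g_q(r)=\frac{q}{2\pi}\int_0^{2\pi}(r+\cos\vartheta)\bigl(r^2+2r\cos\vartheta+1\bigr)^{q/2-1}\,\dd\vartheta$, and one wants this to be nonnegative. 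Pairing $\vartheta$ with $2\pi-\vartheta$ doesn't immediately help since $\cos$ is already even; instead I would pair $\vartheta$ with $\pi-\vartheta$, which sends $\cos\vartheta\mapsto-\cos\vartheta$. Writing $c=\cos\vartheta\in[0,1]$ after this folding, the integrand becomes proportional to
\[
(r+c)\bigl((r+c)^2+(1-c^2)\bigr)^{q/2-1}-(r-c)\bigl((r-c)^2+(1-c^2)\bigr)^{q/2-1},
\]
wait --- more carefully, with $R=1-c^2\geq0$ fixed, this is $\psi(r+c)-\psi(r-c)$ where $\psi(u)=u(u^2+R)^{q/2-1}$. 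Since $c\geq0$ we have $r+c\geq r-c$, so it suffices that $\psi$ is nondecreasing on $\R$. But $\psi(u)=\mathrm{sgn}(u)|u|\,(u^2+R)^{q/2-1}$, and for $u\geq0$ this is $u(u^2+R)^{q/2-1}$ with derivative $(u^2+R)^{q/2-2}\bigl((u^2+R)+(q-2)u^2\bigr)$; when $q\geq2$ this is manifestly nonnegative, and when $0<q<2$ it may fail for large $u$, which is the main obstacle.

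To handle all $q>0$ uniformly I would instead use the connection to the sphere via \eqref{eq:embedding}, or rather its one-variable consequence. Actually the cleanest route: use that $|z+\xi|^q$ itself can be written, via \eqref{eq:embedding} applied in $\R^2$ with the roles reversed, as an average that converts the problem into monotonicity of $\E|r+U|^q$ where $U=\mathrm{Re}(\xi)$ has the arcsine-type density $\frac{1}{\pi}(1-x^2)^{-1/2}\1_{|x|\le1}$ --- but this is the same integral. The honest fix for $0<q<2$ is a subordination/integral-representation trick: write $t^{q/2}=c_q\int_0^\infty (1-e^{-\lambda t})\lambda^{-1-q/2}\,\dd\lambda$ for $0<q/2<1$, apply it with $t=|z+\xi|^2$, and use that $z\mapsto\E e^{-\lambda|z+\xi|^2}=e^{-\lambda(|z|^2+1)}\E e^{-2\lambda\mathrm{Re}(\bar z\xi)}=e^{-\lambda(|z|^2+1)}I_0(2\lambda|z|)$, where $I_0$ is the modified Bessel function. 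Then $g_q(z)=c_q\int_0^\infty\bigl(1-e^{-\lambda(|z|^2+1)}I_0(2\lambda|z|)\bigr)\lambda^{-1-q/2}\,\dd\lambda$, and it remains to check that $r\mapsto e^{-\lambda(r^2+1)}I_0(2\lambda r)$ is decreasing in $r\ge0$ for each $\lambda>0$; since $I_0$ has the integral representation $I_0(x)=\frac{1}{\pi}\int_0^\pi e^{x\cos\vartheta}\,\dd\vartheta$, we get $e^{-\lambda(r^2+1)}I_0(2\lambda r)=\frac1\pi\int_0^\pi e^{-\lambda(r-\cos\vartheta)^2-\lambda\sin^2\vartheta}\,\dd\vartheta$, a mixture of Gaussians in $r$ centered at $\cos\vartheta\le1\le r$ for $r\ge1$, hence decreasing there; the range $0\le r\le1$ is handled by the same Gaussian-mixture expression together with the evenness coming from folding $\vartheta\mapsto\pi-\vartheta$ as above. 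I expect the main obstacle to be packaging this Bessel/Gaussian-mixture computation cleanly rather than any conceptual difficulty; for $q\ge2$ the elementary monotonicity of $\psi$ above suffices and no subordination is needed.
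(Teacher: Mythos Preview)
Your radiality argument is correct, and for $r>1$ things are even simpler than you suggest: in the derivative $g_q'(r)=\frac{q}{2\pi}\int_0^{2\pi}(r+\cos\vartheta)(r^2+2r\cos\vartheta+1)^{q/2-1}\,\dd\vartheta$ the factor $r+\cos\vartheta$ is already strictly positive, so no folding or subordination is needed there (this is exactly the paper's argument for $x>1$). The real issue is the range $0<r<1$ with $0<q<1$, and both of your proposed routes break down. For the folding approach, you need $\psi(c+r)\geq\psi(c-r)$ with $\psi(u)=u(u^2+R)^{q/2-1}$ and $R=1-c^2$; your own derivative simplifies to $\psi'(u)=(u^2+R)^{q/2-2}\bigl((q-1)u^2+R\bigr)$, so $\psi$ is in fact nondecreasing for all $q\geq1$ (not just $q\geq2$), but for $0<q<1$ it eventually decreases, and for $c$ close to $1$ (so $R\approx0$) one has $\psi(u)\approx u^{q-1}$, strictly decreasing, whence $\psi(c+r)<\psi(c-r)$ on a set of positive measure in $\vartheta$. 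For the Bessel/subordination route, the function $r\mapsto e^{-\lambda(r^2+1)}I_0(2\lambda r)$ is \emph{not} decreasing on $[0,1]$ for large $\lambda$: its value at $r=0$ is $e^{-\lambda}$, while at $r=1$ it is $e^{-2\lambda}I_0(2\lambda)\sim(4\pi\lambda)^{-1/2}$, which is much larger. In your Gaussian-mixture picture, for large $\lambda$ the dominant Gaussian is centred near $\cos\vartheta\approx1$, pulling the maximum to $r\approx1$, not $r=0$. The ``folding handles $0\leq r\leq1$'' step is therefore the same obstacle in different clothing, and the argument does not close.

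The paper's remedy for $0<x<1$ is short and uniform in $q>0$: write $|1+xe^{it}|^q=(1+xe^{it})^{q/2}(1+xe^{-it})^{q/2}$ using the principal branch (legitimate since $\mathrm{Re}(1+xe^{it})>0$), expand each factor as a binomial series, and integrate in $t$; only the diagonal terms survive, giving $g_q(x)=\sum_{n\geq0}\binom{q/2}{n}^{2}x^{2n}$, which is manifestly increasing on $(0,1)$.
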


\begin{proof}
By the rotational invariance of $\xi$, $g_q(e^{it}z) = g_q(z)$ for every $t \in \R$ and $z \in \C$, thus $g_q(z)$ depends only on $|z|$. We also note that $g_q$ is continuous as follows for instance from Lebesgue's dominated convergence theorem. This allows to consider separately two cases.

For $0 < x < 1$, we have (using the principal branch of the logarithm and the binomial series)
\begin{align*}
g_q(x) = \E |x+\xi|^q &= \E |x \xi + 1|^q =\frac{1}{2\pi}\int_0^{2\pi} |1+xe^{it}|^q \dd t \\&= \frac{1}{2\pi}\int_0^{2\pi} (1+xe^{it})^{q/2}(1+xe^{-it})^{q/2} \dd t \\
&= \frac{1}{2\pi}\int_0^{2\pi}\sum_{m,n=0}^\infty \binom{q/2}{m}\binom{q/2}{n}x^{m+n}e^{it(m-n)} \dd t \\
&= \sum_{n=0}^\infty \binom{q/2}{n}^2x^{2n},
\end{align*}
so $g_q(x)$ is clearly increasing on $(0,1)$ (this argument also appears e.g. in \cite{Ko}, see (27) therein).  

For $x > 1$, we write
\[
g_q(x) = \frac{1}{2\pi}\int_0^{2\pi} |x+e^{it}|^q \dd t = \frac{1}{2\pi}\int_0^{2\pi} (x^2+2x\cos t + 1)^{q/2} \dd t.
\]
Taking the (real) derivative yields
\[
g_q'(x) = \frac{q}{2\pi}\int_0^{2\pi} (x^2+2x\cos t + 1)^{q/2-1}(x+\cos t) \dd t > 0,
\]
since plainly $x + \cos t > 1 + \cos t > 0$. 
\end{proof}

\begin{remark}\label{rem:Steinh-neg}
For $-1 < q < 0$, this proof shows that $g_q'(x) > 0$ for $0 < x < 1$, whereas $g_q'(x) < 0$ for $x > 1$.
\end{remark}

In contrast to dimension $2$, in higher dimensions $d \geq 3$, the marginal distributions of vectors uniform on spheres are unimodal (recall \eqref{eq:density}), which will allow to also address the range $1 < p < 2$, with the aid of the following simple lemma.

\begin{lemma}\label{lm:monot-unimod}
Let $q>-1$. Let $U$ be a random variable uniform on $[-1,1]$ and set
\[
h_q(x) = \E|x+U|^q, \qquad x \in \R.
\]
This is an even function, decreasing on $[0,+\infty)$ when $-1 < q < 0$ and increasing on $[0,+\infty)$ when $q > 0$.
\end{lemma}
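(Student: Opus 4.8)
The plan is to reduce everything to an explicit one-variable integral and then read off the sign of its derivative. \emph{Evenness} is immediate: in $h_q(x) = \frac12\int_{-1}^1 |x+u|^q \dd u$ the substitution $u \mapsto -u$ gives $h_q(-x) = h_q(x)$, so it suffices to analyse $h_q$ on $[0,+\infty)$.

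For the monotonicity I would first substitute $t = x+u$ to write
\[
h_q(x) = \frac12\int_{x-1}^{x+1} |t|^q \dd t, \qquad x \in \R.
\]
Since $q > -1$, the function $t \mapsto |t|^q$ is locally integrable, so this integral is finite for every $x$ and, moreover, $x \mapsto h_q(x)$ is continuous on $\R$ (absolute continuity of the Lebesgue integral). On each of the open intervals $(0,1)$ and $(1,+\infty)$ the endpoints $x-1$ and $x+1$ stay away from the origin, where $t \mapsto |t|^q$ is continuous, so the fundamental theorem of calculus applies and yields
\[
h_q'(x) = \tfrac12\big((x+1)^q - |x-1|^q\big), \qquad x \in (0,1)\cup(1,+\infty).
\]
(If one prefers to avoid any appeal to continuity of the integral, the same formula follows from the closed expression $h_q(x) = \frac{(1+x)^{q+1}+(1-x)^{q+1}}{2(q+1)}$ valid for $0\le x\le 1$, and the analogous one for $x\ge 1$.)

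The sign is then transparent: for $x \ge 0$ one has $x+1 \ge |x-1| \ge 0$, since both $x+1 \ge 1-x$ and $x+1 \ge x-1$ hold. If $q > 0$ the power $t \mapsto t^q$ is increasing, so $h_q' \ge 0$ on $(0,1)\cup(1,+\infty)$; if $-1 < q < 0$ it is decreasing, so $h_q' \le 0$ there. Combined with the continuity of $h_q$ at the single point $x=1$, this shows that $h_q$ is increasing on all of $[0,+\infty)$ when $q > 0$ and decreasing on all of $[0,+\infty)$ when $-1<q<0$.

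There is no genuinely hard step; the only point requiring a little care is the singularity of $|t|^q$ at the origin when $q<0$. This is precisely why one passes to the variable $t$ — so the hypothesis $q>-1$ is used to get integrability — and why the differentiation is carried out on the two subintervals that avoid $t=0$, with the continuity of $h_q$ bridging the exceptional point $x=1$.
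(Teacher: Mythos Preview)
Your proof is correct and follows essentially the same route as the paper: both compute $h_q'(x)=\tfrac12\big(|x+1|^q-|x-1|^q\big)$ and read off the sign from $x+1>|x-1|$ for $x>0$. You are simply more explicit than the paper about why the differentiation is legitimate near the singularity when $q<0$, which is a welcome bit of extra care.
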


\begin{proof}
The evenness follows from the symmetry. We have,
\[
h_q'(x) = \frac{1}{2}\frac{\dd}{\dd x}\int_{-1}^1 |x+u|^q \dd u = \frac12\Big(|x+1|^q-|x-1|^{q}\Big)
\]
For $x > 0$, plainly $|x+1| = x+1 > |x-1|$, which gives that $h_q'(x) < 0$ when $-1 < q < 0$ and $h_q'(x) > 0$ when $q >0$.
\end{proof}

\subsection{Unimodality and rotational invariance}
A random variable $X$ is called unimodal if its distribution is a mixture of scaled unimodal distributions, that is $X$ has the same distribution as $RU$ for some nonnegative random variable $R$ and an independent uniform $[-1,1]$ random variable $U$. Equivalently, $X$ has a density which is even and nonincreasing on $[0,+\infty)$. Crucially, sums of independent unimodal random variables are unimodal. We refer for instance to \cite{LO}.

A random vector $X$ in $\R^d$ is rotationally invariant if its distribution is invariant with respect to the orthogonal transformations, equivalently if it is a mixture of random vectors uniform on centred Euclidean spheres, that is $X$ has the same distribution as $|X|\eta$, where $\eta$ is uniform on the unit sphere, independent of $X$.

\subsection{Proof of Theorem \ref{thm:main}}
Fix $p > 1$, $n \geq 1$ and let $\theta_1, \dots, \theta_n$ be i.i.d. random variables with density \eqref{eq:density}. After combining Lemma \ref{lm:equiv}, identity \eqref{eq:phi-proj} and Lemma \ref{lm:Hess}, it suffices to show that for $x_1, \dots, x_n > 0$, the expectations
\begin{equation}\label{eq:Ekl}
E_{k,l} = \E\left[\left|\sum_{j=1}^nx_j^{1/p}\theta_j\right|^{p-2}\theta_k\theta_l\right], \qquad k < l,
\end{equation}
are nonnegative when $p \geq 2$ and nonpositive when $p < 2$, assuming additionally that $d \geq 3$. Denote $q = p-2$ and fix $k < l$. 

\emph{Case $d \geq 3$.}
As sums of independent unimodal random variables are unimodal, the sum $\sum_{j \neq k,l} x_j^{1/p}\theta_j$ is of the form $RU$ for some nonnegative random variable $R$ and a random variable $U$ uniform on $[-1,1]$, independent of $R$. It thus follows from Lemma \ref{lm:monot-unimod} that the function
\[
h_q(x) = \E\left|x + \sum_{j \neq k,l} x_j^{1/p}\theta_j\right|^q, \qquad x \in \R,
\]
is decreasing when $-1 < q < 0$ and increasing when $q \geq 0$ (writing $h_q(x) = \E_R\E_U|x+RU|^q$). Thanks to independence,
\[
E_{k,l} = \E_{\theta_k, \theta_l} \left[h_q\left(x_k^{1/p}\theta_k + x_l^{1/p}\theta_l\right)\theta_k\theta_l\right]
\]
and the claim follows from Lemma \ref{lm:2pt-gen}, applied to ($1$-dimensional) variables $X = x_k^{1/p}\theta_k$ and $Y = x_l^{1/p}\theta_l$ which are symmetric. Note that we are able to apply Lemma \ref{lm:2pt-gen} here because \(h_q\) is even hence \(h_q\left(x_k^{1/p}\theta_k + x_l^{1/p}\theta_l\right) = h_q\left( \left|x_k^{1/p}\theta_k + x_l^{1/p}\theta_l\right|\right)\).

\emph{Case $d = 2$.}
Let $\xi_1, \dots, \xi_n$ and $\eta$ be i.i.d. random vectors which are all uniformly distributed on the unit circle. Using identity \eqref{eq:embedding}, we have
\begin{align*}
\E\left[\left|\sum_{j=1}^nx_j^{1/p}\xi_j\right|^{q}\scal{\xi_k}{\xi_l}\right] &= 
%\sum_{m=1}^d\E\left[\left|\sum_{j=1}^nx_j^{1/p}\xi_j\right|^{q}\xi_{k,m}\xi_{l,m}\right] \\
%&= d\cdot\E\left[\left|\sum_{j=1}^nx_j^{1/p}\xi_j\right|^{q}\xi_{k,1}\xi_{l,1}\right] \\
 \beta_{q,d}\E\left[\left|\sum_{j=1}^nx_j^{1/p}\scal{\xi_j}{\eta}\right|^{q}\scal{\xi_k}{\xi_l}\right].
\end{align*}
Now, $\eta$ has the same distribution as $Qe_1$ with $Q$ being a random orthogonal $2 \times~2$ matrix (chosen uniformly, i.e. according to the Haar measure). Noting that $\scal{\xi_k}{\xi_l} = \scal{Q^\top\xi_k}{Q^\top\xi_l}$ and that the $n$-tuple $(\xi_1, \dots, \xi_n)$ has the same distribution as $(Q^\top\xi_1, \dots, Q^\top\xi_n)$, we obtain
\[
\E\left[\left|\sum_{j=1}^nx_j^{1/p}\scal{\xi_j}{\eta}\right|^{q}\scal{\xi_k}{\xi_l}\right] = \E\left[\left|\sum_{j=1}^nx_j^{1/p}\scal{\xi_j}{e_1}\right|^{q}\scal{\xi_k}{\xi_l}\right].
\]
Finally since $\scal{\xi_k}{\xi_l} = \scal{\xi_k}{e_1}\scal{\xi_l}{e_1}+\scal{\xi_k}{e_2}\scal{\xi_l}{e_2}$, using linearity and the fact that the expectation
$\E\left[\left|\sum_{j=1}^nx_j^{1/p}\scal{\xi_j}{e_1}\right|^{q}\scal{\xi_k}{e_2}\scal{\xi_l}{e_2}\right]$
vanish (by unconditionality), we arrive at
\[
E_{k,l} = \beta_{q,2}^{-1}\E\left[\left|\sum_{j=1}^nx_j^{1/p}\xi_j\right|^{q}\scal{\xi_k}{\xi_l}\right].
\] 
To show that the right hand side is nonnegative, it remains to combine Lemmas \ref{lm:2pt-gen} and \ref{lm:monot-steinhaus}. Indeed, the random vector $Y = \sum_{j \neq k,l}^nx_j^{1/p}\xi_j$ is rotationally invariant, thus the function
\[
h_q(z) = \E|z+Y|^q,
\]
is radial and by Lemma \ref{lm:monot-steinhaus} it is nondecreasing in $|z|$. Thanks to independence,
\[
\E\left[\left|\sum_{j=1}^nx_j^{1/p}\xi_j\right|^{q}\scal{\xi_k}{\xi_l}\right] = \E_{\xi_k, \xi_l}\left[h_q\left(\left|x_k^{1/p}\xi_k + x_l^{1/p}\xi_l\right|\right)\scal{\xi_k}{\xi_l}\right]
\]
and Lemma \ref{lm:2pt-gen} finishes the proof.\hfill$\square$

\section{Concluding remarks}

\subsection{Basic sequences}
Recall that a sequence $(x_n)_{n=1}^\infty$ in a Banach space $(B, \|\cdot\|)$ over $\mathbb{K} = \R \text{ or } \C$ is called \emph{basic} if it is a Schauder basis of the closure of its (linear) span. Additionally, it is called \emph{$\mathbb{K}$-$1$-unconditional}, if for every sequence $(a_n)_{n=1}^\infty$ in $\mathbb{K}$ such that $\sum a_nx_n$ converges and every sequence $(\xi_n)_{n=1}^\infty$ of numbers in $\mathbb{K}$ with modulus $1$, we have
\[
\left\|\sum \xi_na_nx_n \right\| = \left\|\sum a_nx_n \right\|.
\]
This definition should be contrasted to the standard definition of $1$-unconditional sequences which only allows the scalar sequence to take values in $\{-1,1\}$ regardless of the underlying field. When $\mathbb{K} = \R$, this notion reduces to standard $1$-unconditionality, and when $\mathbb{K} = \C$, we have that $\C$-$1$-unconditional series are properly $1$-unconditional (with the converse not holding in general). As such, in general $\mathbb{K}$-$1$-unconditionality is a stronger property than $1$-unconditionality. For an example, a sequence of i.i.d. Rademacher/Steinhaus random variables in $L_p$ is $\R/\C$-1-unconditional.

Let $p \geq 1$. Expressing a relevant notion from lattice theory explicitly, we say that a $\mathbb{K}$-$1$-unconditional basic sequence \((x_n)\) in a normed space \((X, \lVert \cdot \rVert )\) is $p$-concave with constant $C$ if the following Hanner-type inequality holds:
\[
\left(\int_0^1 \left\|\sum_{n=1}^N f_n(t) x_n\right\|^p \dd t\right)^{1/p} \leq C\left\|\sum_{n=1}^N \|f_n\|_px_n\right\|
\]
for every $N \geq 1$ and every functions $f_1, \dots, f_N$ in $L_p$ (see, e.g. \cite{LT} and \cite{Sch}). For instance, Schechtman's result \eqref{eq:Hanner-many} gives that the $p$-concavity constant of the Rademacher sequence in $B = L_p(\Omega, \mathbb{P})$ equals $1$, when $p \geq 3$. 

For $B = L_p$ with $\mathbb{K} = \R$ and $p \geq 2$, Schechtman in \cite{Sch} established several equivalent conditions for the Rademacher sequence having $p$-concavity constant $1$ (see Lemma 1 therein). This and all arguments from his paper transfer verbatim to the complex setting and combined with Theorem \ref{thm:main} yield the following result.

\begin{corollary}\label{cor:lattice}
Let $2 \leq p \leq \infty$. For $L_p$ spaces over $\C$, we have

(i) The $p$-concavity constant of the Steinhaus sequence in $L_p$ is $1$.

(ii) The $p$-concavity constant of every $\C$-$1$-unconditional basic sequence in $L_p$ is $1$.

(iii) Every $\C$-$1$-unconditional basic sequence $(x_n)_{n=1}^\infty$ in $L_p$ normalised such that $\|x_n\|_p = 1$ for every $n \geq 1$ satisfies
\[
\left\|\sum_{n=1}^N a_n x_n\right\|_p \leq \left\|\sum_{n=1}^N a_n\xi_n\right\|_p
\]
for all $N \geq 1$ and complex numbers $a_1, \dots, a_N$, where $(\xi_n)_{n=1}^\infty$ are i.i.d. Steinhaus random variables.
\end{corollary}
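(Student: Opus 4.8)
The plan is to reduce all three parts to Theorem~\ref{thm:main} specialised to $d=2$ — equivalently, by Lemma~\ref{lm:equiv}, to the concavity on $\R_+^N$ of the functions $\phi_N$ from \eqref{eq:phi} for $p\ge 2$, where now $\xi_1,\xi_2,\dots$ are i.i.d. Steinhaus random variables (we identify $\R^2$ with $\C$, so uniform on $S^1$ becomes Steinhaus). The only additional input is the defining feature of $\C$-$1$-unconditionality: multiplying each coefficient by a unimodular scalar leaves $\big\|\sum_n a_nx_n\big\|_p$ unchanged. As in the rest of the paper I would take the ambient $L_p$ space to be $L_p[0,1]$, hence a probability space; the case $p=\infty$ is elementary (it follows from the triangle inequality and from the monotonicity of a $\C$-$1$-unconditional basis, itself a consequence of unimodular invariance by an averaging argument), so I assume $2\le p<\infty$. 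For clarity let $s$ denote the variable of the ambient $L_p[0,1]$ (so that $x_n=x_n(s)$) and $t$ the variable of the functions $f_n$; below $(v_n)_n$ abbreviates the vector $(v_1,\dots,v_N)\in\R_+^N$.

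The key step is the identity
\begin{equation*}
\Big\|\sum_{n=1}^N c_nx_n\Big\|_p^p \;=\; \int_0^1 \phi_N\big(|c_1x_1(s)|^p,\dots,|c_Nx_N(s)|^p\big)\,\dd s, \tag{$\star$}
\end{equation*}
valid for every $\C$-$1$-unconditional basic sequence $(x_n)_{n=1}^N$ in $L_p[0,1]$ and all $c_1,\dots,c_N\in\C$. To prove it, take i.i.d. Steinhaus random variables $(\xi_n)_{n=1}^N$ on an auxiliary probability space. By $\C$-$1$-unconditionality the random quantity $\big\|\sum_n\xi_nc_nx_n\big\|_p$ is almost surely equal to the constant $\big\|\sum_nc_nx_n\big\|_p$, so, using Fubini,
\[
\Big\|\sum_nc_nx_n\Big\|_p^p = \E\Big\|\sum_n\xi_nc_nx_n\Big\|_p^p = \E\int_0^1\Big|\sum_n\xi_nc_nx_n(s)\Big|^p\dd s = \int_0^1\E\Big|\sum_n\xi_nc_nx_n(s)\Big|^p\dd s.
\]
For each fixed $s$ write $c_nx_n(s)=|c_nx_n(s)|e^{i\beta_n}$; since each $\xi_ne^{i\beta_n}$ is again uniform on the circle and independence is preserved, $(\xi_ne^{i\beta_n})_n$ has the same law as $(\xi_n)_n$, whence the inner expectation equals $\E\big|\sum_n\xi_n|c_nx_n(s)|\big|^p=\phi_N(|c_1x_1(s)|^p,\dots,|c_Nx_N(s)|^p)$ by the definition \eqref{eq:phi}. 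This is $(\star)$.

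Given $(\star)$, the three conclusions come out by feeding it into Jensen's inequality for the concave $\phi_N$. For part (iii) (with $\|x_n\|_p=1$): applying $(\star)$ with $c_n=a_n$ and then Jensen in $s$ against Lebesgue measure — whose barycentre sends $s\mapsto(|a_nx_n(s)|^p)_n$ to $(|a_n|^p\|x_n\|_p^p)_n=(|a_n|^p)_n$ — yields
\[
\Big\|\sum_na_nx_n\Big\|_p^p \le \phi_N(|a_1|^p,\dots,|a_N|^p) = \E\Big|\sum_n\xi_na_n\Big|^p = \Big\|\sum_na_n\xi_n\Big\|_p^p,
\]
the middle equality again absorbing phases. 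For part (ii): apply $(\star)$ in $s$ for each fixed $t$ with $c_n=f_n(t)$, integrate over $t\in[0,1]$, use Fubini, then Jensen in $t$ (the barycentre now sending $t\mapsto(|f_n(t)|^p|x_n(s)|^p)_n$ to $(\|f_n\|_p^p|x_n(s)|^p)_n$), which gives
\[
\int_0^1\Big\|\sum_nf_n(t)x_n\Big\|_p^p\,\dd t \;\le\; \int_0^1\phi_N\big(\|f_1\|_p^p|x_1(s)|^p,\dots,\|f_N\|_p^p|x_N(s)|^p\big)\,\dd s \;=\; \Big\|\sum_n\|f_n\|_px_n\Big\|_p^p,
\]
the last equality by $(\star)$ once more (with $c_n=\|f_n\|_p$); the case $N=1$ shows the $p$-concavity constant equals $1$. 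Finally, part (i) is the instance of part (ii) in which $(x_n)$ is the Steinhaus sequence itself (which is $\C$-$1$-unconditional), and it is also literally \eqref{eq:main} for $d=2$ after expanding the $L_p$-norms over the Steinhaus space; sharpness is again the case $N=1$.

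The sole genuine obstacle is the identity $(\star)$: one has to keep the two independent integrations apart — the ambient measure carrying the $x_n$ and the auxiliary Steinhaus phases — and exploit the rotational invariance of the Steinhaus distribution to absorb, pointwise in $s$, the arguments of the complex numbers $c_nx_n(s)$. Everything downstream is a mechanical application of Jensen's inequality to the concavity assertion of Theorem~\ref{thm:main}. In effect this reproves, in the complex setting, Schechtman's Lemma~1 from \cite{Sch}; its proof and all the deductions it supports carry over verbatim.
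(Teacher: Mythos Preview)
Your proof is correct and follows exactly the route the paper indicates: the paper does not give a detailed argument but simply asserts that Schechtman's Lemma~1 and the surrounding arguments ``transfer verbatim to the complex setting and combined with Theorem~\ref{thm:main} yield the following result,'' and your identity~$(\star)$ together with the Jensen applications is precisely that transfer written out in full. Your final sentence already says as much.
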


\subsection{Missing range}
In the setting of Theorem \ref{thm:main}, in the Rademacher case ($d=1$), it is an open problem (to the best of our knowledge) to prove Theorem 1's \eqref{eq:main} for $2 < p < 3$ as well as the opposite inequality for $1 < p < 2$ (\cite{Tak} has an error: Jensen's inequality is incorrectly used in the proof of Theorem 1). In the Steinhaus case ($d=2$), only the range $1 < p < 2$ is left open. It is no longer true that all expectations $E_{k,l}$ from \eqref{eq:Ekl} have the desired sign, and thus, if following the Hessian approach, we would need a coarser grouping of the terms in the sum from Lemma \ref{lm:Hess}, which has been elusive.


\begin{thebibliography}{9}

\bibitem{BCL}
Ball, K., Carlen, E., Lieb, E. H.,
Sharp uniform convexity and smoothness inequalities for trace norms.
Invent. Math. 115 (1994), no. 3, 463--482.

\bibitem{BD}
Bourgain, J., Davis, W. J.,
Martingale Transforms and Complex Uniform Convexity.
Transactions of the American Mathematical Society 294 (1986) no. 2, pp. 501--515.


\bibitem{CFIL}
Carlen, E. A., Frank, R. L., Ivanisvili, P., Lieb, E. H.,
Inequalities for $L^p$-norms that sharpen the triangle inequality and complement Hanner's inequality.
J. Geom. Anal. 31 (2021), no. 4, 4051--4073.

\bibitem{CFL}
Carlen, E. A., Frank, R. L., Lieb, E. H.,
Inequalities that sharpen the triangle inequality for sums of $N$ functions in $L^p$.
Ark. Mat. 58 (2020), no. 1, 57--69.

\bibitem{C}
Clarkson, J. A., Uniformly convex spaces. Trans. Amer. Math. Soc. 40 (1936), no. 3, 396--414.

\bibitem{DGT}
Davis, W. J., Garling, D. J. H., Tomczak-Jaegermann, N.,
The complex convexity of quasinormed linear spaces.
J. Funct. Anal. 55 (1984), no. 1, 110--150.

\bibitem{DS}
Dilworth, S. J.,
Complex convexity and the geometry of Banach spaces.
Math. Proceedings of the Cambridge Philosophical Society 99 (1986), no. 3, 495--506.

\bibitem{Ha}
Hanner, O.,
On the uniform convexity of $L_p$ and $l_p$.
Ark. Mat. 3 (1956), 239--244.

\bibitem{Ver1}
Hyt\"onen, T., van Neerven, J., Veraar, M., Weis, L., Analysis in Banach spaces. Vol. I. Martingales and Littlewood-Paley theory. Series of Modern Surveys in Mathematics  63. Springer, Cham, 2016.

\bibitem{Ver2}
Hyt\"onen, T., van Neerven, J., Veraar, M., Weis, L., Analysis in Banach spaces. Vol. II. Martingales and Littlewood-Paley theory. Series of Modern Surveys in Mathematics  63. Springer, Cham, 2016.

\bibitem{IM}
Ivanisvili, P., Mooney, C.,
Sharpening the triangle inequality: envelopes between $L^2$ and $L^p$ spaces.
Anal. PDE 13 (2020), no. 5, 1591--1603.


\bibitem{Tak}
Kigami, A., Okazaki, Y., Takahashi, Y., A generalization of Hanner's inequality. Bull. Kyushu Inst. Tech. Math. Natur. Sci. No. 43 (1996), 9--13. 



\bibitem{Ko}
K\"onig, H.,
On the best constants in the Khintchine inequality for Steinhaus variables.
Israel J. Math. 203 (2014), no. 1, 23--57.

\bibitem{KK}
K\"onig, H., Kwapie\'n, S.,
Best Khintchine type inequalities for sums of independent, rotationally invariant random vectors.
Positivity 5 (2001), no. 2, 115--152.



\bibitem{LO}
Lata\l a, R., Oleszkiewicz, K.,
A note on sums of independent uniformly distributed random variables.
Colloq. Math. 68 (1995), no. 2, 197--206.


\bibitem{LT}
Lindenstrauss, J., Tzafriri, L., Classical Banach spaces. II. Function spaces. Ergebnisse der Mathematik und ihrer Grenzgebiete, 97. Springer-Verlag, Berlin-New York, 1979.


\bibitem{Sch}
Schechtman, G. Two remarks on 1-unconditional basic sequences in $L_p$, $3 \leq p < \infty$. Geometric aspects of functional analysis (Israel, 1992--1994), 251--254, Oper. Theory Adv. Appl., 77, Birkh\"auser, Basel, 1995.


\bibitem{Tru}
Trubnikov, Yu. V.,
The Hanner inequality and the convergence of iterative processes.
Izv. Vyssh. Uchebn. Zaved. Mat. 1987, no. 7, 57--64, 84.


\end{thebibliography}
\end{document}